\documentclass[12pt]{article}
\usepackage[utf8]{inputenc}
\usepackage{hyperref}  
\usepackage{amsmath,amssymb,amsthm}
\usepackage{xcolor}
\usepackage[normalem]{ulem}  
\renewcommand\le{\leqslant}
\renewcommand\ge{\geqslant}

\newcommand\R{\mathbb R}
\newcommand\eps{\varepsilon}

\DeclareMathOperator\supp{supp}

\newtheorem*{theorem}{Theorem}
\newtheorem{lemma}{Lemma}

\newtheorem*{example}{Example}

\newtheorem{exttheorem}{Theorem}

\newtheorem{extstatement}{Statement}

\title{Kolmogorov widths of balls in mixed norms: the case of rigidity}
\author{Yuri Malykhin\thanks{Steklov Mathematical Institute, Lomonosov Moscow
State University; malykhin-yuri@yandex.ru},
Konstantin Ryutin\thanks{Lomonosov Moscow State University, Moscow Center of
Fundamental and Applied Mathematics; kriutin@yahoo.com}}

\begin{document}

\maketitle

\begin{abstract}
We describe the set of parameters $(p_1,p_2,q_1,q_2)$ such that the balls
$B_{q_1,q_2}^{s,b}$ are rigid in $\ell_{q_1,q_2}^{s,b}$ metric i.e. they are
poorly approximated by linear subspaces of dimension $\le (1-\eps)sb$, for
large $s,b$. 
Thus we have settled an important qualitative case in the problem of estimating
widths of balls in mixed norms.  The proof combines lower bounds from our
previous papers  and a new construction for the approximation by linear
subspaces in the so-called
exceptional case.
\end{abstract}

\paragraph{Introduction.}

A fundamental approximation characteristic is the Kolmogorov width~--- the error
of approximation of a set in a normed space by subspaces of given dimension. The
discretization technique often allows to estimate the widths of functional
classes via the widths of the corresponding finite-dimensional sets.
E.M.~Galeev initiated the systematic study of widths of balls in mixed norms
in order to estimate the widths of some multivariate function classes.

Let us say (informally) that an $N$-dimensional set is \textit{rigid} if it can
not be well approximated by linear spaces of dimension essentially smaller than
$N$. A typical example of an extremely rigid set is
an $N$-dimensional section of the unit ball of a normed
space: Tikhomirov's theorem states that any subspace of
dimension $<N$ does not approximate the ball better than a single point $0$.

We consider the Kolmogorov widths of finite-dimensional balls in mixed norms
and focus on the rigid case.
This paper continues the
study of the rigidity conducted in the series of
papers~\cite{M22}, \cite{M24}, \cite{MR25}.

We recall some definitions. The error of approximation of some set $K$ in a
normed space $X$ by subspaces of given dimension is captured by the quantity
called the Kolmogorov width of a set $K\subset
X$:
$$
d_n(K,X) := \inf_{Q_n\subset X}\sup_{x\in
K}\inf_{y\in Q_n}\|x-y\|_X,
$$
where  $\inf_{Q_n}$ is over all linear subspaces in $X$ of dimension at most $n$. 
We will use some well known estimates for the widths of finite-dimensional balls
(see e.g.~\cite{LGM,Tikh87,P85}).

Let $\ell_q^N$ denote the space $\R^N$ with the norm
$\|x\|_q := (\sum_{k=1}^N |x_k|^q)^{1/q}$ and
$B_q^N$ be its  unit ball.

It is well--known (see~\cite{Glus83}) that if $q>\max\{p,2\}$ then the ball
$B_p^N$ is well approximated by low-dimensional spaces:
\begin{equation}
    \label{lp_ball_nonrigid}
            d_n(B_p^N,\ell_q^N) \le C(p,q)N^{-\delta},
        \quad\mbox{if $n\ge N^{1-\delta},\;\delta=\delta(p,q)>0$}.
    \end{equation}
However, if $q\le\max\{p,2\}$, then \sout{all} the balls $B_p^N$ are
rigid in $\ell_q^N$ in the following sense:
\begin{equation}
    \label{lp_ball_rigidity}
d_n(B_p^N,\ell_q^N) \ge c(\eps)d_0(B_p^N,\ell_q^N) = c(\eps)\sup_{x\in B_p^N}\|x\|_q,
\quad \mbox{if }n\le N(1-\eps).
\end{equation}
Indeed, if $p\ge q$, then (see~\cite{Piet74})
\begin{equation}
    \label{pietch_stesin}
d_n(B_p^N,\ell_q^N) = (N-n)^{\frac1q-\frac1p} \ge
    \eps^{\frac1q-\frac1p}N^{\frac1q-\frac1p} =
    \eps^{\frac1q-\frac1p}d_0(B_p^N,\ell_q^N).
\end{equation}
If $p<q\le 2$, then $d_0(B_p^N,\ell_q^N)=1$, but
$d_n(B_p^N,\ell_q^N) \ge d_n(B_1^N,\ell_2^N) = (1-n/N)^{1/2} \ge \eps^{1/2}$.

E.D.~Gluskin~\cite{G87} proved that the set $V_k^N = B_\infty^N \cap kB_1^N$ is
also rigid: $d_n(V_k^N,\ell_q^N) \ge c(q)k^{1/q}$, if $n\le N/2$ and $1\le q\le 2$.
In our previous paper~\cite{MR25} we generalized these ideas of Gluskin and
obtained the following result.

Suppose that $H$ is a subgroup of $S_N$. It acts in a usual way on the set
$\{1,\ldots,N\}$. Recall that the action is called transitive if for any
$i,j\in\{1,\ldots,N\}$ there is $h\in H$ such that $h(i)=j$. 
\begin{exttheorem}[\cite{MR25}]
\label{th_transit}
    Let $H$ be a group that transitively acts on the set of $N$
    coordinates. Let $K$ be an unconditional set in $\R^N$ that is also
    invariant under the action of $H$, i.e. for all $h\in H$,
    $$
    (x_1,\ldots,x_N)\in K \quad \Rightarrow \quad (\pm x_{h(1)},\ldots, \pm
    x_{h(N)})\in K.
    $$
    Then for $1\le q\le 2$, any $\eps\in(0,1)$ we have
    $$
    d_n(K,\ell_q^N) \ge c(q,\eps)\sup_{x\in K}\|x\|_q,\quad\mbox{if $n\le
    N(1-\eps)$.}
    $$
\end{exttheorem}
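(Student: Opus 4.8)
The plan is to reduce to a single structured body, run a symmetrization argument powered by transitivity, and then convert a Euclidean spread estimate into an $\ell_q$ lower bound by duality; the genuine difficulty will be the multi-scale profile of the extremal vector. Since $x\mapsto \inf_{y\in Q_n}\|x-y\|_q$ is convex and the width only sees the supremum over $K$, I first replace $K$ by its closed absolutely convex hull, and then observe that it suffices to treat $K_0=\mathrm{absconv}(G x^*)$, where $x^*$ nearly maximizes $\|\cdot\|_q$ on $K$ and $G=H\ltimes\{\pm1\}^N$ is the full group of signed permutations preserving $K$. As $\|\cdot\|_q$ is convex, its maximum over $K_0$ is attained at an extreme point of the orbit, so $\sup_{K_0}\|\cdot\|_q=\|x^*\|_q=:R$. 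The one structural consequence of transitivity I will use repeatedly is that if $\sigma$ is uniform on $H$ then, by orbit--stabilizer, $\sigma(i)$ is uniform on $\{1,\dots,N\}$; hence the random orbit point $\xi=gx^*$, with $g\in G$ uniform, has mean-zero coordinates with $\E[\xi\xi^\top]=(\|x^*\|_2^2/N)I$.

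Fix any $Q_n$ with $n\le(1-\eps)N$ and let $P$ be the orthogonal projection onto $Q_n^\perp$, so $\tr P=N-n\ge\eps N$. For every $x$ one has $\inf_{y\in Q_n}\|x-y\|_q\ge\inf_{y\in Q_n}\|x-y\|_2=\|Px\|_2$, and averaging the square over the orbit gives $\E\|P\xi\|_2^2=\tr\bigl(P\,\E[\xi\xi^\top]\bigr)=(\|x^*\|_2^2/N)\tr P\ge\eps\|x^*\|_2^2$. This already yields a point of $K_0$ at distance $\ge\sqrt\eps\,\|x^*\|_2$ from $Q_n$, i.e.\ the Euclidean rigidity bound essentially for free. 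The obstruction is that $\|x^*\|_2$ can be far smaller than $R=\|x^*\|_q$ (think of a flat $x^*$, where $K_0$ is a cube), so this is not yet the theorem.

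To recover the correct $\ell_q$ scale I use duality with a projection-adapted functional. Since $P\xi\in Q_n^\perp$, the vector $\phi=P\xi/\|P\xi\|_{q'}$ is admissible in $\inf_{y\in Q_n}\|\xi-y\|_q=\sup\{\langle\xi,\psi\rangle:\psi\in Q_n^\perp,\ \|\psi\|_{q'}\le1\}$, whence
\[
\inf_{y\in Q_n}\|\xi-y\|_q\ \ge\ \frac{\|P\xi\|_2^2}{\|P\xi\|_{q'}},\qquad \tfrac1q+\tfrac1{q'}=1 .
\]
The numerator is controlled in mean as above; for the denominator I bound the $q'$-th moment by Khintchine over the signs and then average over $\sigma\in H$, using $\sum_j P_{ij}^2=P_{ii}\le1$ and $\sum_i P_{ii}=N-n$, to obtain $\E\|P\xi\|_{q'}^{q'}\le C(q)\,\|x^*\|_{q'}^{q'}$. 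For a single-scale extremal vector ($|x^*_i|\equiv t$ on a set of size $m$) these combine to $\inf_{y\in Q_n}\|\xi-y\|_q\gtrsim_\eps \|x^*\|_2^2/\|x^*\|_{q'}=t\,m^{1/q}=R$, matching the target uniformly in $m$.

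The single-functional bound $\|P\xi\|_2^2/\|P\xi\|_{q'}$ equals $R$ only when $x^*$ lives at one scale: for a vector with a moderate peak on top of a flat bulk it undershoots, since the $\ell_{q'}$ denominator is inflated by the peak while the $\ell_q$ target is set by the bulk. The remedy is to split $x^*$ dyadically by magnitude, $x^*=\sum_\ell x^{(\ell)}$ with $|x^{(\ell)}_i|\asymp 2^\ell$ on a set $S_\ell$ of size $m_\ell$ and $R^q\asymp\sum_\ell m_\ell 2^{\ell q}$, and to run the dual construction on the scales carrying $\ell_q$-mass while choosing test functionals supported so as not to see the larger scales. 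I expect the bookkeeping here---combining the $O(\log N)$ active scales so as to recover the full $R$ rather than $R/(\log N)^{1/q}$, i.e.\ avoiding a logarithmic loss---to be the main obstacle, and it is exactly where a Gluskin-type lower bound for the hull of the signed indicators $\{t\,\epsilon\,\mathbf 1_{\sigma(S_\ell)}\}$ enters. The delicate point, absent from Gluskin's fully symmetric setting, is that transitivity of $H$ supplies only uniform single-coordinate marginals for $\sigma(S_\ell)$ rather than genuinely random $m_\ell$-subsets; the whole argument must therefore be arranged to depend on first- and second-moment (marginal) information alone, which is precisely what the orbit--stabilizer identity provides.
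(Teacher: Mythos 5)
Note first that the paper you are working from does not actually prove Theorem~\ref{th_transit}: it is imported verbatim from the earlier work~\cite{MR25}, so there is no internal proof to compare against; your proposal has to stand on its own. Its sound part is the orbit-averaging framework: replacing $K$ by the hull of the orbit of a near-maximizer $x^*$, the identity $\E\|P\xi\|_2^2=(\mathrm{tr}\,P)\|x^*\|_2^2/N\ge\eps\|x^*\|_2^2$, and the dual bound $\inf_{y\in Q_n}\|\xi-y\|_q\ge\|P\xi\|_2^2/\|P\xi\|_{q'}$. Even your moment claim $\E\|P\xi\|_{q'}^{q'}\le C(q)\|x^*\|_{q'}^{q'}$ is correct and provable from transitivity alone (apply Khintchine in the signs, then the triangle inequality in $L^{q'/2}(\sigma)$ to $\sum_j P_{ij}^2 (x^*_{\sigma^{-1}(j)})^2$, using $\sum_j P_{ij}^2\le 1$ and the uniformity of single-coordinate marginals). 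This machinery does prove the theorem whenever $x^*$ is essentially single-scale, which is exactly Gluskin's regime (the extreme points of $V_k^N$ are $0/\pm1$ vectors).

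The genuine gap is that the multi-scale case --- which is the entire content of the theorem beyond Gluskin --- is not proved but only flagged as ``the main obstacle.'' Your own diagnosis is accurate: for a vector such as $x^*_i=i^{-1/q}$, every one of the $\asymp\log N$ dyadic levels carries a comparable share of $\|x^*\|_q^q$, so the ratio $\|x^*\|_2^2/\|x^*\|_{q'}$ undershoots $R$ badly, and running your dual construction level-by-level and taking the best level can only yield $c\,R/(\log N)^{1/q}$, which is strictly weaker than the claimed bound $c(q,\eps)R$ (the constant must not depend on $N$). The proposal contains no mechanism for combining the levels without this logarithmic loss: the suggested ``test functionals supported so as not to see the larger scales'' are not constructed, no weighting scheme across scales is specified, and the appeal to ``a Gluskin-type lower bound for the hull of the signed indicators'' presupposes essentially the statement being proved. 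Until that combination step is carried out, what you have is a correct proof of a special case plus a research plan for the hard case, not a proof of the theorem.
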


We recall the definition  of a mixed norm (the mix of the $\ell_{q_1}^s$ and
$\ell_{q_2}^{b}$ norms). Consider the space
$\R^N$ and suppose that its coordinates are divided
into $b$ blocks of size $s$ (therefore, $N=sb$). Let $x[j]\in \R^s$ denote the $j$-th block of
$x\in\R^N$. Define the norm of a vector $x\in\R^N$ as
$$
\|x\|_{q_1,q_2} := \|y\|_{q_2},
\quad \mbox{where }y:=(\|x[1]\|_{q_1},\ldots,\|x[b]\|_{q_1}).
$$
Or, in a straightforward way,
$$
\|x\|_{q_1,q_2} = 
(\sum_{j=1}^b (\sum_{s(j-1)<i\le sj}|x_i|^{q_1})^{q_2/q_1})^{1/q_2}
$$
(with the usual modification for $q_1=\infty$ or $q_2=\infty$).
The space with
this norm is denoted as $\ell_{q_1,q_2}^{s,b}$ and its unit ball is
$B_{q_1,q_2}^{s,b}$.
The study of widths in
mixed norms was initiated by E.M.~Galeev in~\cite{Gal90,Gal95}
with connection to widths of multivariate function classes.
(Some authors prefer the notation
$\ell_{q_2}^b(\ell_{q_1}^s)$ but we decided to keep the notation 
introduced by Galeev.)

The orders of Kolmogorov widths $d_n(B_p^N,\ell_q^N)$, $1\le
p,q\le\infty$, are mostly known (with the exception of the
case $1\le p<2$, $q=\infty$). The situation with the orders of decay for
$d_n(B_{p_1,p_2}^{s,b},\ell_{q_1,q_2}^{s,b})$ is much less clear. 
We describe below the cases when these orders are known.
Let us refer to particular cases by specifying
quadruples $(p_1,p_2,q_1,q_2)$.

Most of the results correspond to the ``rigid'' case;
namely, the bounds
$d_n(B_{p_1,p_2}^{s,b},\ell_{q_1,q_2}^{s,b}) \asymp
d_0(B_{p_1,p_2}^{s,b},\ell_{q_1,q_2}^{s,b})$ for $n\le N/2$ were proved.
The first results were obtained by Galeev: \cite{Gal90}: $(1,\infty,2,q)$, $1<q<\infty$;
\cite{Gal95}: $(p_1,\infty,q_1,q_2)$, where $p_1\in\{1,\infty\}$; $1<q_2\le\infty$; $q_1=2$ or
$1<q_1\le\min\{q_2,2\}$.
Later, in~\cite{Gal01} he added the case $(\infty,1,q,q)$, $1<q\le 2$.
Galeev asked about the important case
$(1,\infty,2,1)$: A.D.~Izaak~\cite{Iza94} got a lower bound with a logarithmic gap;
Yu.V.~Malykhin and K.S.~Ryutin removed it in the paper~\cite{MR17};
a corollary is the rigidity for $p_1\le q_1\le 2$, $p_2\ge q_2$.
Izaak~\cite{Iza96} also mentions $(p,p,2,1)$, $p\in\{1\}\cup[2,\infty]$.
In~\cite{MR25} we obtained $(p_1,p_2,q,q)$, $1\le q\le 2$, see
Statement~\ref{stm_mixed_lp} below.
A.A.~Vasilieva~\cite{Vas24b} obtained the case $p_1\ge q_1$,
$p_2\ge q_2$.

The orders of widths are known also in several other cases.
Vasilieva~\cite{Vas13,Vas24b} settled the case $2\le q_1,q_2<\infty$.
S.~Dirksen and T.~Ullrich~\cite{DU18} considered the Gelfand widths
in mixed norms with the focus on the semi-normed case
($0<p\le 1$); by duality the orders of the widths were obtained
for: a) $(2,p,2,\infty)$, $p\ge 2$; b)
$(p,p,\infty,p)$, $p\ge 2$.

These estimates for $d_n(B_{p_1,p_2}^{s,b},\ell_{q_1,q_2}^{s,b})$ were applied
in the study of approximation properties of functional classes such as
Holder--Nikolski classes (see~\cite{Gal90,Gal96,Iza96,MR17}),
Besov classes of  mixed smoothness (\cite{Gal01,DU18}),
weighted Besov classes~\cite{Vas13};
see also a thorough survey on mixed smoothness
classes:~\cite{DTU}.
Mixed norms appear in the study of widths of Schatten classes,
see~\cite{HPV21,PS22}.
For the overview of the results on the widths of the balls in mixed norms,
see~\cite{Gal11,Vas24a}.
   
In this paper we settle the following subproblem~--- an important qualitative
case of the general problem of estimating
$d_n(B_{p_1,p_2}^{s,b},\ell_{q_1,q_2}^{s,b}$): describe the set of
parameters $p_1,p_2,q_1,q_2$ such that the balls $B_{p_1,p_2}^{s,b}$ are rigid
in the $\ell_{q_1,q_2}^{s,b}$ norm for large $s,b$.

\begin{theorem}
   Let $1\le p_1,p_2,q_1,q_2\le\infty$; $s,b\in\mathbb N$ and $N:=sb$.

    A) Assume that the tuple $(p_1,p_2,q_1,q_2)$
    satisfies three conditions:
    (i) $q_1\le\max\{p_1,2\}$; (ii) $q_2\le\max\{p_2,2\}$;
   (iii) it does
    not fall into the ``exceptional case''
    \begin{equation}
        \label{exception}
        q_1<\min\{p_1,q_2\}\mbox{ and }p_2<q_2\le 2.
    \end{equation}
    Then for any $\eps>0$ we have
    $$
        d_n(B_{p_1,p_2}^{s,b},\ell_{q_1,q_2}^{s,b})
        \ge c(q_2,\eps)d_0(B_{p_1,p_2}^{s,b},\ell_{q_1,q_2}^{s,b}),
        \quad\mbox{if $n\le N(1-\eps)$.}
    $$
    B) For any tuple $(p_1,p_2,q_1,q_2)$ that does not satisfy A), there exists
       $\gamma>0$, such that for sufficiently large  $t:=\min(s,b)$  we have
    $$
    d_n(B_{p_1,p_2}^{s,b},\ell_{q_1,q_2}^{s,b})
    \le t^{-\gamma} d_0(B_{p_1,p_2}^{s,b},\ell_{q_1,q_2}^{s,b}),\quad\mbox{when
    $n\ge Nt^{-\gamma}$}.
    $$
\end{theorem}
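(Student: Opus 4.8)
The plan is to handle the two parts by completely different mechanisms: part~A) is a lower bound obtained by reducing the mixed target to a single $\ell_Q$-norm with $Q\le 2$ and then invoking the transitivity principle (Theorem~\ref{th_transit}), while part~B) is an approximation-theoretic upper bound built by lifting the single-norm non-rigidity~\eqref{lp_ball_nonrigid} and, in the exceptional case, by a genuinely new subspace construction.

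For part~A), observe first that $B_{p_1,p_2}^{s,b}$ is unconditional and invariant under the group generated by permutations inside the blocks together with permutations of the blocks, which acts transitively on the $N=sb$ coordinates; hence Theorem~\ref{th_transit} applies verbatim whenever the target is an honest $\ell_Q^N$ with $Q\le 2$. I would use a two-sided comparison: for a suitable $Q=Q(q_1,q_2)\le 2$ one has a pointwise bound $\|x\|_{q_1,q_2}\ge \lambda\,\|x\|_Q$ with an explicit power $\lambda=\lambda(s,b)$, which yields $d_n(B_{p_1,p_2}^{s,b},\ell_{q_1,q_2}^{s,b})\ge \lambda\, d_n(B_{p_1,p_2}^{s,b},\ell_Q^N)\ge c(Q,\eps)\,\lambda\, d_0(B_{p_1,p_2}^{s,b},\ell_Q^N)$; it then remains to verify the matching identity $\lambda\, d_0(\cdot,\ell_Q^N)\asymp d_0(\cdot,\ell_{q_1,q_2}^{s,b})$. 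In the borderline sub-case $q_2\le 2$, $q_1\ge q_2$, $q_1<p_1$, $p_2<q_2$ (the one adjacent to~\eqref{exception}) one takes $Q=q_2$ and $\lambda=s^{1/q_1-1/q_2}$, and a direct computation of both zero-widths (each equal to $s^{1/q_1-1/p_1}$) gives the identity. The cases with $q_1>2$ or $q_2>2$ force, by (i)--(ii), the inequality $q_1\le p_1$ or $q_2\le p_2$, and there I would first strip the super-quadratic exponent by the elementary bound $\|\cdot\|_r\ge m^{1/r-1/2}\|\cdot\|_2$ on the relevant length-$m$ level to reduce to an $\le 2$ exponent, the surviving Pietsch--Stesin rigidity~\eqref{pietch_stesin} supplying the matching zero-width. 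The role of~\eqref{exception} is precisely that there no admissible $Q\le 2$ can match the zero-widths: once $q_1<q_2$ the only available comparison is $\|x\|_{q_1,q_2}\ge\|x\|_{q_2}$, whose zero-width is smaller than $d_0(\cdot,\ell_{q_1,q_2}^{s,b})=s^{1/q_1-1/p_1}$ by a genuine power of $s$, so the reduction must fail — consistent with part~B).

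For part~B) the negation of~A) splits into three regimes. If (i) fails, i.e. $q_1>\max\{p_1,2\}$, then by~\eqref{lp_ball_nonrigid} each block $B_{p_1}^s$ is approximated in $\ell_{q_1}^s$ by a subspace of dimension $s^{1-\delta}$ with relative error $s^{-\delta}$; taking the direct sum $W^{\oplus b}$ of the (scaled) inner subspaces gives dimension $Ns^{-\delta}$, and since the per-block errors are recombined in the outer $\ell_{q_2}$-norm exactly as the block-norms of $x$ are, the relative error stays $\lesssim s^{-\delta}$. If (ii) fails, the same idea is applied at the outer level: the non-rigidity of $B_{p_2}^b$ in $\ell_{q_2}^b$ is lifted through the inner coordinate by a tensor subspace $V\otimes\R^s$ with $V\subset\R^b$ the outer approximating subspace, of dimension $b^{1-\delta}s=Nb^{-\delta}$. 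In both regimes the discarded exponent lies on the non-rigid side, so $\gamma=\delta$ works, using $s^{-\delta}\le t^{-\delta}$ and $b^{-\delta}\le t^{-\delta}$.

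The remaining and genuinely new regime is the exceptional case~\eqref{exception}, where both (i) and (ii) hold — each level is separately rigid — yet the combination is not, and this is where I expect the main difficulty. Here $d_0=s^{1/q_1-1/p_1}$ is attained by a single flat block, and the projection bound only forces dimension $\gtrsim s=N/b$, leaving room for a subspace of dimension $Nt^{-\gamma}$; the obstruction is that orthogonal projection onto a random subspace produces a residual spread across all blocks, hence of $\ell_{q_1,q_2}$-norm of order $d_0$, so a naive random construction fails. The construction I would pursue is a (semi-)random subspace adapted to the block structure, for which the $\ell_{q_1,q_2}$-best approximation of every ball vector leaves a residual that is spread \emph{inside} each block rather than across blocks; the heart of the argument is a probabilistic estimate — in the spirit of Gluskin's averaging and using $q_2\le 2$ to gain from cancellation across the $b$ blocks — showing that with high probability this error is at most $t^{-\gamma}d_0$ simultaneously for all $x\in B_{p_1,p_2}^{s,b}$. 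Turning this heuristic into a uniform bound, and in particular controlling the within-block $\ell_{q_1}$-error against the rigid Pietsch--Stesin lower bound on each block, is the principal obstacle and the new contribution.
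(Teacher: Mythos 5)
There are two genuine gaps, one in each part.

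\textbf{Part A).} Your case list omits, and your reduction scheme provably cannot handle, the case $p_1<q_1\le 2$, $p_2\ge q_2$ (case c) of the paper; prototype $(p_1,p_2,q_1,q_2)=(1,\infty,2,1)$, the product of octahedra in $\ell_{2,1}$). For this tuple $d_0(B_{1,\infty}^{s,b},\ell_{2,1}^{s,b})=b$, while for any $Q\in[1,2]$ the best constant in a pointwise bound $\|x\|_{2,1}\ge\lambda\|x\|_Q$ is $\lambda\le s^{1/2-1/Q}$ (test one all-ones column), and $d_0(B_{1,\infty}^{s,b},\ell_Q^N)=b^{1/Q}$; so your method yields at best $\lambda\,d_0(\cdot,\ell_Q^N)\le s^{1/2-1/Q}b^{1/Q}$, and since the exponents $1/Q-1/2\ge 0$ and $1-1/Q\ge 0$ sum to $1/2$, this is $\le t^{-1/2}b$ for $s=b=t$, whichever $Q$ you pick. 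Thus no single-norm target $\ell_Q^N$, $Q\le2$, matches the zero-width here — which also refutes your closing claim that such non-matching characterizes the exceptional case~\eqref{exception}: it occurs in this \emph{rigid} case as well. The paper closes this case not via Theorem~\ref{th_transit} but via the genuinely mixed-norm inequality~\eqref{oct_l21}, $d_n(B_{1,\infty}^{s,b},\ell_{2,1}^{s,b})\ge c(\eps)b$ for $n\le N(1-\eps)$ (from~\cite{MR17}), combined with the inclusions $B_{p_1,p_2}^{s,b}\supset B_{1,p_2}^{s,b}\supset b^{-1/p_2}B_{1,\infty}^{s,b}$ and the comparison $\|\cdot\|_{2,1}\le b^{1-1/q_2}\|\cdot\|_{2,q_2}$; an ingredient of this strength is unavoidable. (Your cases a), b), d1), d2) are fine and essentially coincide with the paper's arguments, which likewise rest on Statement~\ref{stm_mixed_lp} and~\eqref{pietch_stesin}.)

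\textbf{Part B).} The decisive piece is missing: in the exceptional case~\eqref{exception} you offer only a heuristic and explicitly concede that the uniform bound is an unresolved ``principal obstacle.'' That bound is precisely the paper's main new contribution, and it is achieved by a deterministic, not probabilistic, construction: a combinatorial design (affine lines in $\mathbb F_r^d$) gives an $(m,r,l)$-partition $\Gamma$ of $[s]\times[b]$ with $m\le C(d)Nb^{-1/d}$ and $l\le C(d)sr/b$ (Lemmas~\ref{lem_partition_from_sets}, \ref{lem_good_partition}); the averaging operator $D_\Gamma$ onto the space $Q_\Gamma$ of $\Gamma$-constant matrices maps any one-column vector to a residual with at most $l$ nonzero entries \emph{per column} (Lemma~\ref{lem_operator_bound}) — note this is the opposite of your heuristic: the error is spread thinly \emph{across} blocks, not confined inside a block — producing the gain $b^{-\alpha}$ with $\alpha=(1/q_1-1/p_1)-(1/q_2-1/p_1)_+>0$ exactly because $q_1<\min\{p_1,q_2\}$; finally a best $k$-term approximation of the block-norm vector (using $p_2<q_2$), with $k\asymp b^{\alpha/4}$, reduces a general $x\in B_{p_1,p_2}^{s,b}$ to $k$ one-column pieces. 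A secondary issue: in your regime where (ii) fails, lifting by $V\otimes\R^s$ is not automatic, since the best approximation onto $V$ is nonlinear and a column-by-column application of it loses a factor up to $\|y\|_1\le b^{1-1/p_2}$; one must first sparsify at the outer level (again using $p_2<q_2$) and only then approximate the surviving $k$ columns. Your direct-sum construction for the failure of (i) is correct, and in fact makes precise what the paper only asserts.
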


We have already used that $d_0(B_p^N,\ell_q^N)=N^{(1/q-1/p)_+}$, where, as
usual, $h_+:=h$, if $h\ge 0$, and $h_+:=0$ if $h<0$. Therefore, for the mixed
norms, we have
$$
d_0(B_{p_1,p_2}^{s,b},\ell_{q_1,q_2}^{s,b}) =
s^{(\frac1{q_1}-\frac1{p_1})_+}b^{(\frac1{q_2}-\frac1{p_2})_+}.
$$

\begin{example}
    Let us give an illustrative example of an ``exceptional
    case'': $d_n(B_{\infty,1}^{s,b},\ell_{1,2}^{s,b})$. We treat  $x$ as an
    $s\times b$ matrix; the block $x[j]$ as the
    $j$-th column of $x$.  It is clear that the
    extremal points of $B_{\infty,1}^{s,b}$ are matrices having support in one
    column, with $\pm1$ entries.
    Suppose that $s=b$ and denote by $x^T$ the transpose of the matrix $x$.
    For any extremal point $x$ we have
    
    $$
    \|x-(x-x^T)\|_{1,2} = \|x^T\|_{1,2} \le s^{1/2},
    $$
        since $x^T$ has only one nonzero entry in each column.
    Therefore, $x$ is approximated by a skew--symmetric matrix $x-x^T$  (these matrices are elements of $s(s-1)/2$--dimensional subspace)  with an
    error $s^{1/2}$ and we obtain
    $$
    d_{s(s-1)/2}(B_{\infty,1}^{s,s},\ell_{1,2}^{s,s}) \le s^{1/2},
    $$
    while $d_0(B_{\infty,1}^{s,s},\ell_{1,2}^{s,s}) = s$.
\end{example}

    We see that the mix of two rigid sets may be not rigid in the mixed norm.

    Part A) of Theorem is a simple corollary of the following two results.
    The first one is from $\cite{MR25}$.
\begin{extstatement}
    \label{stm_mixed_lp}
    Let $1\le p_1,p_2\le\infty$; $1\le q\le 2$; $N=sb$. Then for $n\le
    N(1-\eps)$, $\eps\in(0,1)$, we have
    $$
    c(q,\eps)s^{(\frac1q-\frac1{p_1})_+}b^{(\frac1q-\frac1{p_2})_+}
    \le d_n(B_{p_1,p_2}^{s,b},\ell_q^N)
    \le s^{(\frac1q-\frac1{p_1})_+}b^{(\frac1q-\frac1{p_2})_+}.
    $$
\end{extstatement}

Statement~\ref{stm_mixed_lp} follows from Theorem~\ref{th_transit}, because the ball
$B_{p_1,p_2}^{s,b}$ is invariant under the action of the
group $S_s\times S_b$. The group acts on the coordinates in the following way: $S_b$ permutes
blocks and $S_s$ permutes coordinates in blocks. (The proof given in~\cite{MR25}
is slightly different.)

The second one was proven in~\cite{MR17}.
\begin{extstatement}
    $d_{N/2}(B_{1,\infty}^{s,b},\ell_{2,1}^{s,b})\ge cb$.
\end{extstatement}
In fact, we need slightly more general inequality, namely,
\begin{equation}
    \label{oct_l21}
    d_n(B_{1,\infty}^{s,b},\ell_{2,1}^{s,b})\ge c(\eps)b,
    \quad\mbox{if $n\le N(1-\eps)$.}
\end{equation}
We checked that the proof from~\cite{MR17} needs only slight changes and gives the required estimate  for arbitrary $\eps>0$.

The rest of the paper is devoted to the proof of the Theorem.

\paragraph{A) Lower bounds.}

Each of the conditions (i), (ii) breaks into two non-overlapping cases: $p_i\ge q_i$ or $p_i<q_i\le
2$; we will consider them separately.

Case a) $p_1\ge q_1$, $p_2\ge q_2$. We have
$d_0=s^{(1/q_1-1/p_1)}b^{(1/q_2-1/p_2)}$. Since we have the inclusion 
$B_{p_1,p_2}^{s,b}\supset s^{-1/p_1}b^{-1/p_2}B_\infty^N$,
it is sufficient to prove the rigidity for the cube, $p_1=p_2=\infty$.
If $q_1\le q_2$, then by~\eqref{pietch_stesin} we have
$$
d_n(B_\infty^N,\ell_{q_1,q_2}^{s,b})
\ge b^{1/q_2-1/q_1} d_n(B_\infty^N,\ell_{q_1,q_1}^{s,b}) \ge b^{1/q_2-1/q_1}
\eps N^{1/q_1} = \eps s^{1/q_1}b^{1/q_2}.
$$
The subcase $q_1>q_2$ is analogous.

Case b) $p_1<q_1\le 2$, $p_2<q_2\le 2$; then $d_0=1$. We have
$$
    d_n(B_{p_1,p_2}^{s,b},\ell_{q_1,q_2}^{s,b}) \ge d_n(B_1^N,\ell_2^N) =
    (1-n/N)^{1/2} \ge \eps^{1/2}.
$$
Note that this also works for $p_1=q_1\le 2$ or $p_2=q_2\le 2$.

Case c) $p_1<q_1\le 2$, $p_2\ge q_2$. We have
$d_0=b^{1/q_2-1/p_2}$. 
    We replace the pair $(p_1,q_1)$ by
    $(1,2)$ (it does not affect $d_0$ and the width $d_n$ can only
    decrease). Next, we use the inclusion $B_{1,p_2}^{s,b}\supset
    b^{-1/p_2}B_{1,\infty}^{s,b}$, the bound $\|x\|_{2,1} \le
    b^{1-1/q_2}\|x\|_{2,q_2}$ and reduce the lower bound to the
    inequality~\eqref{oct_l21}:
    \begin{multline*}
    d_n(B_{p_1,p_2}^{s,b},\ell_{q_1,q_2}^{s,b})
    \ge d_n(B_{1,p_2}^{s,b},\ell_{2,q_2}^{s,b})
    \ge b^{-1/p_2} d_n(B_{1,\infty}^{s,b},\ell_{2,q_2}^{s,b}) \ge \\
    \ge b^{-1/p_2} b^{-1+1/q_2}d_n(B_{1,\infty}^{s,b},\ell_{2,1}^{s,b})
    \ge c(\eps) b^{1/q_2-1/p_2}.
    \end{multline*}

    Case d) $p_1\ge q_1$, $p_2 < q_2 \le 2$;  $d_0 =
    s^{1/q_1-1/p_1}$. We
    have to consider several sub-cases.
 
Case d1) $q_1\ge q_2$. We use Statement~\ref{stm_mixed_lp}:
$$
    d_n(B_{p_1,p_2}^{s,b},\ell_{q_1,q_2}^{s,b})
    \ge s^{1/q_1-1/q_2} d_n(B_{p_1,p_2}^{s,b},\ell_{q_2}^N)
    \ge c(q_2,\eps) s^{1/q_1-1/p_1}.
$$

    Case d2) $p_1=q_1 \le 2$. This case is covered by
    the same argument as in b).
    
    If the parameters do not fall into the subcases d1) or d2), it means that
    $q_1<q_2\le 2$, $q_1<p_1$, $p_2<q_2\le 2$ so we are in the exceptional case.

\paragraph{B) Upper bounds.}
    We have to prove that for all other tuples of $(p_1,p_2,q_1,q_2)$ we do not have rigidity. 
        Note that the necessary condition for the rigidity in mixed norm is that
    both $B_{p_1}^s$ in $\ell_{q_1}^s$ and
    $B_{p_2}^b$ in $\ell_{q_2}^b$ are rigid.
    We mentioned (see~\eqref{lp_ball_nonrigid}, \eqref{lp_ball_rigidity}) that $B_p^N$ is rigid in $\ell_q^N$ exactly when
    $q\le\max\{p,2\}$. Hence the conditions (i) and (ii) in A) are, of course,
    necessary and it is enough to consider only the exceptional case: $p_1>q_1$, $p_2<q_2\le 2$ and $q_1<q_2$. We remark that 
    $d_0=s^{1/q_1-1/p_1}$.
   
    Our construction is the generalization of the example
    $d_n(B_{\infty,1}^{s,s},\ell_{2,1})$. Instead of the transposition,
    we construct more complicated operator that ``distributes''
    coordinates of $x$ among miscellaneous columns in order to minimize the
    mixed norm.

    Some preparations are required. Let us use the notation
    $[N]:=\{1,2,\ldots,N\}$. We identify vectors in $\mathbb{R}^N$, $N=sb$, with
    $s\times b$ matrices and use both terms (vector or matrix) later on; the $j$-th block $x[j]$ becomes the $j$-th column ($1\le j\le b$). 

    Let $\Gamma=(\Gamma_1,\ldots,\Gamma_m)$  be a partition of the set
    $[s]\times[b]$:
    $$
    [s]\times[b] = \Gamma_1\sqcup\Gamma_2\sqcup\cdots\sqcup\Gamma_m.
    $$
    We call $\Gamma$ a $(m,r,l)$-partition if the following conditions are satisfied:
\begin{itemize}
    \item[(i)] $|\Gamma_k|\le r$ for $k=1,\ldots,m$;
    \item[(ii)] any column $\mathcal{C}^j := [s]\times\{j\}$ contains at
        most one point from each set:
        $|\mathcal{C}^j\cap\Gamma_k|\le1$, $j=1,\ldots,b$, $k=1,\ldots,m$;
    \item[(iii)] for any two columns only $l$ sets can intersect both of them:
        $$
        |\{k\colon\mathcal{C}^{j_1}\cap\Gamma_k\ne\varnothing,\;\mathcal{C}^{j_2}\cap\Gamma_k\ne\varnothing\}|\le
        l,$$ for any $1\le j_1<j_2\le b$.
\end{itemize}

    Note that if there exists a $(m,r,l)$-partition of $[s']\times[b']$, then there is a
    $(m,r,l)$-partition of $[s]\times[b]$ for any $s\le s'$, $b\le b'$. Indeed,
    the partition of a larger set induces the partition of a smaller set.

\begin{lemma}
    \label{lem_partition_from_sets}
    Suppose that sets $A_1,\ldots,A_m\subset[b]$ have cardinality at most $r$
    and satisfy the conditions:
    1) each point $j\in[b]$ lies in at least $s$ sets $A_k$;
    2) any pair $\{j_1,j_2\}\subset[b]$, $j_1\ne j_2$, is contained in at most $l$ sets $A_k$.
    Then there exists a $(m,r,l)$-partition of $[s]\times[b]$.
\end{lemma}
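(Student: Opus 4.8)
The plan is to turn each set $A_k\subset[b]$ into the part $\Gamma_k$ by populating $\Gamma_k$ with cells whose columns lie in $A_k$, using exactly one cell per column. Concretely, I would construct an assignment $\phi\colon[s]\times[b]\to[m]$ and set $\Gamma_k:=\phi^{-1}(k)$; the whole task is to choose $\phi$ so that (a) a cell in column $j$ is sent only to an index $k$ with $j\in A_k$, and (b) the $s$ cells of any fixed column receive pairwise distinct indices.

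The key observation is that these two requirements can be met column by column, independently. Fix a column $j$ and let $S_j:=\{k\colon j\in A_k\}$. Hypothesis~1) says $|S_j|\ge s$, so there is an injection from the $s$ cells $(1,j),\ldots,(s,j)$ into $S_j$; I would let $\phi$ restricted to column $j$ be any such injection. Performing this for every $j$ defines $\phi$ on all of $[s]\times[b]$, and since each cell receives a single value, the family $\{\Gamma_k\}$ is automatically a (possibly empty-part) partition of $[s]\times[b]$.

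It then remains to verify the three defining properties, all of which are short. Property~(ii) is immediate from the injectivity of $\phi$ within each column. For~(i), note that by construction $\Gamma_k$ contains only cells from columns $j\in A_k$, and by~(ii) at most one from each such column, so $|\Gamma_k|\le|A_k|\le r$. For~(iii), if $\Gamma_k$ meets both $\mathcal{C}^{j_1}$ and $\mathcal{C}^{j_2}$, then it contains a cell in column $j_1$ and a cell in column $j_2$, forcing $j_1\in A_k$ and $j_2\in A_k$, i.e. $\{j_1,j_2\}\subseteq A_k$; hypothesis~2) bounds the number of such $k$ by $l$.

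I do not expect a genuine obstacle here: the statement is really a clean dictionary between a set system on $[b]$ and a partition of $[s]\times[b]$, and the only thing that must line up is that condition~1) supplies exactly the $s$ ``slots'' per column needed to place the $s$ rows injectively. The substantive work of the construction lies elsewhere~--- in producing sets $A_1,\ldots,A_m$ with good values of $m$, $r$, $l$~--- whereas this lemma is merely the bridge from such a set system to the partition on which the approximating operator will be built.
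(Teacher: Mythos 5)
Your proof is correct and is essentially identical to the paper's own argument: both assign the $s$ cells of each column injectively to $s$ of the (at least $s$) sets $A_k$ containing that column's index, and both deduce (i) and (iii) from the observation that $\mathcal{C}^j\cap\Gamma_k\ne\varnothing$ forces $j\in A_k$. Your write-up merely formalizes this as a map $\phi$ and spells out the verification the paper leaves implicit.
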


\begin{proof}
    Let us construct $\Gamma_1,\ldots,\Gamma_m$; we start with empty sets.
    For any $j\in[b]$ we take $s$ sets
    $A_{k_1},\ldots,A_{k_s}$ containing $j$.
    We add the point $(1,j)$ to $\Gamma_{k_1}$, the point $(2,j)$ to
    $\Gamma_{k_2}$, etc, $(s,j)$ to $\Gamma_{k_s}$. The property (ii) is not
    violated. If we do this for all $j$, we will fill the rectangle
    $[s]\times[b]$.
    Note that $\mathcal C^j\cap\Gamma_k\ne\varnothing$ only if
    $j\in A_k$; this implies (i) and (iii) and we are done.
(Some of the  sets $\Gamma_k$ may remain empty.)
\end{proof}

In order to construct regular set systems it is convenient to use combinatorial
designs. Necessary definitions and basic results can be found
in~\cite[Chs.1,5]{St04}.

Recall that a $(b,r,l)$-design is a sequence of sets
$A_1,\ldots,A_m\subset[b]$, such that $|A_k|=r$
for any $k$ and  any pair $\{j_1,j_2\}\subset[b]$, $j_1\ne j_2$, is contained
in exactly $l$ sets $A_k$.
It is well--known that any point
$j\in[b]$ lies in $l(b-1)/(r-1)$ sets of the design; moreover, $m=lb(b-1)/(r^2-r)$.
Any $(b,r,l)$-design produces a $(b,r,lh)$-design
($h\in\mathbb N$): just repeat each set $h$ times.
If $r$ is a prime power and $d\in\mathbb N$, $d\ge2$, then there exists a
$(r^d,r,1)$-design; it is given by the collection  of all affine lines in
$\mathbb{F}_r^d$.

\begin{lemma}
    \label{lem_good_partition}
    For any numbers $s,b,d\ge 2$, $s\ge b$, there exists a $(m,r,l)$-partition of
    $[s]\times[b]$ with the parameters
    $$
    b^{1/d} \le r\le 2b^{1/d},
    \quad m\le C_1(d)sb/r,
    \quad l \le C_2(d)sr/b.
    $$
\end{lemma}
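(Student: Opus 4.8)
The plan is to obtain the partition by invoking Lemma~\ref{lem_partition_from_sets}, so the whole task reduces to producing a family of sets $A_1,\dots,A_m\subset[b]$ with $|A_k|\le r$ in which every point lies in at least $s$ sets and every pair lies in at most $l$ sets, for suitable $r,m,l$. The natural source of such a balanced family is a combinatorial design, and the affine-line construction over $\mathbb F_r^d$ is perfectly matched to the required relation $b\asymp r^d$.

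First I would fix the block size. By Bertrand's postulate, applied to $n:=\lfloor b^{1/d}\rfloor\ge 1$, there is a prime $r$ with $n<r\le 2n$, hence $b^{1/d}\le r\le 2b^{1/d}$; in particular $r^d\ge b$, so $[b]$ may be regarded as a subset of $\mathbb F_r^d$, the latter identified with $[r^d]$. Take the $(r^d,r,1)$-design formed by all affine lines of $\mathbb F_r^d$: it has $m_0=r^d(r^d-1)/(r(r-1))=r^{d-1}\lambda$ blocks of size exactly $r$, every point lies on exactly $\lambda:=(r^d-1)/(r-1)$ lines, and every pair lies on exactly one line. Repeating each block $h:=\lceil s/\lambda\rceil$ times yields an $(r^d,r,h)$-design in which every point is covered $h\lambda\ge s$ times and every pair is covered exactly $h$ times. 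Finally I restrict every block to $[b]$: the restricted sets have size $\le r$, each point of $[b]$ still lies in $h\lambda\ge s$ of them, and each pair in $[b]$ still lies in exactly $h$ of them. Thus Lemma~\ref{lem_partition_from_sets} applies with $l=h$ and produces an $(m,r,l)$-partition of $[s]\times[b]$ with $m=hm_0$.

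It remains to verify the three quantitative bounds. The bound on $r$ is built into its choice. For $l=h$ I would use $\lambda\ge r^{d-1}$ and $b\le r^d$ to get $h\le s/\lambda+1\le sr/b+1$, and since $s\ge b$ forces $sr/b\ge r>1$, this gives $l\le C_2(d)\,sr/b$ (indeed $C_2$ can be taken absolute). For $m=hm_0=(h\lambda)\,r^{d-1}$ I would write $h\lambda\le s+\lambda\le s+2r^{d-1}$, using $\lambda\le 2r^{d-1}$, so that $m\le sr^{d-1}+2r^{2d-2}$; then the hypothesis $s\ge b\ge(r/2)^d$ yields $r^{d-1}\le 2^d s$, whence $r^{2d-2}\le 2^d s r^{d-1}$ and $m\le C(d)\,sr^{d-1}$. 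Finally $r^{d-1}\le 2^d b/r$ (from $r^d\le 2^d b$) converts this into $m\le C_1(d)\,sb/r$.

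The bookkeeping above is routine; the step requiring the most care is balancing the two competing bounds, since the number of blocks $m$ and the pair-multiplicity $l$ pull in opposite directions, and it is precisely the design identity $m_0=r^{d-1}\lambda$ combined with $s\ge b\asymp r^d$ that makes $m\lesssim sb/r$ and $l\lesssim sr/b$ hold simultaneously. The other point to handle cleanly is securing a prime $r$ inside the tight window $[b^{1/d},2b^{1/d}]$, so that all constants remain functions of $d$ alone; this is exactly what the choice $n=\lfloor b^{1/d}\rfloor$ in Bertrand's postulate delivers.
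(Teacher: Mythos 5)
Your proof is correct and follows essentially the same route as the paper: an affine-line $(r^d,r,1)$-design over $\mathbb{F}_r^d$ with $r\asymp b^{1/d}$, each block repeated $\lceil s/\lambda\rceil$ times to guarantee point-coverage $\ge s$, then Lemma~\ref{lem_partition_from_sets}. The only (harmless) differences are that the paper takes $r$ a power of $2$ with $b'=r^d\ge b$ and truncates the resulting partition of $[s]\times[b']$, whereas you take $r$ prime via Bertrand's postulate and truncate the design sets to $[b]$ before invoking Lemma~\ref{lem_partition_from_sets}.
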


\begin{proof}
    Suppose that $b=2^{ud}$ with $u\in\mathbb N$.
    Let $r=2^u$, then $b=r^d$ and a
    $(b,r,1)$-design exists. Each point of $[b]$ lies in $(b-1)/(r-1)$
    design sets. Let $l$ be the minimal number such that $l(b-1)/(r-1)\ge s$.
    The $(b,r,l)$-design obtained by the repetition of each set $l$ times,
    satisfies the conditions of Lemma~\ref{lem_partition_from_sets} and produces a
    $(m,r,l)$-partition with the parameters
    $$
    r=b^{1/d},
    \quad l = \left\lceil\frac{s(r-1)}{b-1}\right\rceil,
    \quad m=\frac{lb(b-1)}{r(r-1)}.
    $$
    The required inequalities are satisfied. 

    The general case follows, since we have  $2^{-d}b'<b\le b'$,
    $b'=2^{ud}$, with an appropriate choice of  $u$.
\end{proof}

With a $(m,r,l)$-partition $[s]\times[b]=\Gamma_1\sqcup\ldots\sqcup\Gamma_m$ one
can associate the
$m$-dimensional space $Q_\Gamma\subset\R^{s\times b}$ of vectors that are
constant on each of the sets $\Gamma_k$. Define the linear operator
$D_\Gamma\colon\mathbb R^{s\times b}\to Q_\Gamma$:
$$
D_\Gamma(e_{i,j}) = \sum_{(u,v)\in\Gamma_k}e_{u,v},\quad\mbox{where $\Gamma_k\ni(i,j)$}.
$$

\begin{lemma}
    \label{lem_operator_bound}
    Let $\Gamma$ be a $(m,r,l)$-partition of $[s]\times[b]$; then for any
    matrix
    $x\in\R^{s\times b}$ whose support is located in one column, and any
    $p,q_1,q_2\in[1,+\infty]$, we have
    $$
    \|x-D_\Gamma x\|_{q_1,q_2} \le l^{(1/q_1-1/p)_+}b^{(1/q_2-1/p)_+}(r-1)^{1/p}\|x\|_p.
    $$
\end{lemma}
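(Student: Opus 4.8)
The plan is to describe the matrix $y := x - D_\Gamma x$ explicitly and then estimate its mixed norm block by block. Suppose the support of $x$ lies in column $j_0$, and write $a_i := x_{i,j_0}$, so that $\|x\|_p = (\sum_i |a_i|^p)^{1/p}$. For each $i$ let $\Gamma_{k_i}$ be the unique part of the partition containing $(i,j_0)$; by property (ii) the indices $k_i$ are pairwise distinct, and each $\Gamma_{k_i}$ meets the support of $x$ only in the single point $(i,j_0)$. Hence $D_\Gamma x$ equals $a_i$ on all of $\Gamma_{k_i}$ and $0$ off $\bigcup_i\Gamma_{k_i}$, so $y$ \emph{vanishes on the support column} $j_0$ (there $x$ and $D_\Gamma x$ coincide) and takes the value $-a_i$ at each point of $\Gamma_{k_i}\setminus\{(i,j_0)\}$. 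Thus the single-column data $(a_i)$ is ``spread out'': the value $-a_i$ appears in those columns $v\ne j_0$ that $\Gamma_{k_i}$ meets, with one entry per such column by (ii).

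The next step is to record two combinatorial bounds coming from the partition. First, from $|\Gamma_{k_i}|\le r$ together with (ii), each $a_i$ is distributed to at most $r-1$ distinct columns. Second, for a fixed column $v\ne j_0$ the set $S_v := \{i\colon\Gamma_{k_i}\cap\mathcal{C}^v\ne\varnothing\}$ corresponds to distinct parts meeting both $\mathcal{C}^{j_0}$ and $\mathcal{C}^v$, so property (iii) gives $|S_v|\le l$. Consequently the $v$-th block $y[v]$ is a vector with at most $l$ nonzero coordinates, namely $\{-a_i\colon i\in S_v\}$.

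With this structure in hand I would estimate the two nested norms separately. For the inner norm, since $y[v]$ has at most $l$ nonzero entries, the elementary inequality $\|z\|_{q_1}\le l^{(1/q_1-1/p)_+}\|z\|_p$ (monotonicity of $\ell_p$-norms when $q_1\ge p$, power-mean/H\"older when $q_1<p$) yields $\|y[v]\|_{q_1}\le l^{(1/q_1-1/p)_+}T_v^{1/p}$, where $T_v := \sum_{i\in S_v}|a_i|^p$. For the outer norm it remains to bound $\sum_{v\ne j_0}T_v^{q_2/p}$. Here the two facts $\sum_v T_v\le (r-1)\|x\|_p^p$ (each $a_i$ is counted in at most $r-1$ columns) and $\#\{v\colon T_v\ne0\}\le b$ feed into H\"older's inequality: when $q_2\le p$ one pays a factor $b^{1-q_2/p}$, while when $q_2>p$ one uses that $(\sum_v T_v^{q_2/p})^{p/q_2}\le\sum_v T_v$. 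Together these produce exactly the exponent $b^{(1/q_2-1/p)_+ q_2}$ and the factor $(r-1)^{q_2/p}$; assembling the pieces and taking the $q_2$-th root reproduces the claimed bound.

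I expect the only genuinely delicate point to be the bookkeeping in the first two paragraphs~--- correctly reading off from (i)--(iii) that the spread values $-a_i$ occupy distinct columns, that at most $l$ of them land in any one column, and that each $a_i$ reaches at most $r-1$ columns. Once these three counts are clean, the two-norm estimate is routine H\"older, with only the usual care for the $(\cdot)_+$ exponents and the endpoint values $p,q_1,q_2\in\{\infty\}$.
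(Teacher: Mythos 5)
Your proof is correct and follows essentially the same route as the paper's: both rest on the observation that $x-D_\Gamma x$ has at most $l$ nonzero entries per column (via property (iii)), compare the inner $\ell_{q_1}$ and outer $\ell_{q_2}$ norms to $\ell_p$ with the factors $l^{(1/q_1-1/p)_+}$ and $b^{(1/q_2-1/p)_+}$, and use that each entry of $x$ reappears at most $r-1$ times to get $(r-1)^{1/p}\|x\|_p$. Your version merely makes explicit the bookkeeping (the sets $S_v$ and the two-case H\"older step) that the paper compresses into the chain $\|z\|_{q_1,q_2}\le b^{(1/q_2-1/p)_+}\|z\|_{q_1,p}\le b^{(1/q_2-1/p)_+}l^{(1/q_1-1/p)_+}\|z\|_p$.
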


\begin{proof}
    Denote $z:=x-D_\Gamma x$; the crucial property of $z$ is that it has at most
    $l$ nonzero elements in each column (this follows from the property (iii) of
    the partition).
    Hence, $\|z[j]\|_{q_1} \le l^{(1/q_1-1/p)_+}\|z[j]\|_p$ and
    $$
    \|z\|_{q_1,q_2} \le b^{(1/q_2-1/p)_+}\|z\|_{q_1,p}
    \le b^{(1/q_2-1/p)_+}l^{(1/q_1-1/p)_+}\|z\|_p.
    $$
    Any nonzero coordinate of $x$ is found among the coordinates of $z$ at most
    $r-1$ times; hence $\|z\|_p^p \le (r-1)\|x\|_p^p$ and we finish the proof.
\end{proof}

We return to the proof of Theorem. Let us start with the case
$s\ge b$.
Pick some $d$ (we will specify this choice later) and apply
Lemmas~\ref{lem_good_partition} and~\ref{lem_operator_bound}. We see  that
for any $x$ with support in one column, we have
\begin{multline*}
\|x-D_\Gamma x\|_{q_1,q_2}
\le l^{(1/q_1-1/p_1)_+}b^{(1/q_2-1/p_1)_+}r^{1/p_1}\|x\|_{p_1} \le \\
    \le C(d)(sr/b)^{1/q_1-1/p_1} b^{(1/q_2-1/p_1)_+} r^{1/p_1} \|x\|_{p_1}
    = C(d) s^{1/q_1-1/p_1}b^{-\alpha} r^{1/q_1} \|x\|_{p_1},
\end{multline*}
where $\alpha := (\frac1{q_1}-\frac1{p_1}) - (\frac1{q_2}-\frac1{p_1})_+$. Note
that as $q_1<q_2$ and $q_1<p_1$, we have $\alpha>0$, hence
\begin{equation}
    \label{one_column_bound}
\|x-D_\Gamma x\|_{q_1,q_2} \le C(d) s^{1/q_1-1/p_1}b^{-\alpha/2} \|x\|_{p_1},
\end{equation}
if we choose $d$ large enough. (If $p_2=1$, this would be enough to bound the
width.)

Given $y\in\R^b$ and $k\le b$, we consider the best $k$--term approximation  
$$
\sigma_k(y)_q := \min\{\|y-z\|_q\colon \#\supp z\le k\}.
$$
There is a well--known inequality (see, say, \cite{Tem18}, Lemma 7.6.6):
$$
\sigma_{k-1}(y)_q \le k^{-(1/p-1/q)}\|y\|_p,\quad 1\le p<q\le \infty.
$$

Fix $x\in B_{p_1,p_2}^{s,b}$. Then the vector
$y=(\|x[1]\|_{p_1},\ldots,\|x[b]\|_{p_1})$ has norm $\|y\|_{p_2}\le 1$. Hence,
$$
\sigma_{k-1}(y)_{q_2}\le k^{-(1/p_2-1/q_2)}\|y\|_{p_2} \le k^{-(1/p_2-1/q_2)} =:
\delta,
$$
we shall choose $k$ later. So, there is a set of indices
$\Lambda$, $|\Lambda|<k$, such that
$\|y^\Lambda-y\|_{q_2}\le\delta$, where $y^\Lambda\in \mathbb{R}^b$ coincides
with $y$ on $\Lambda$ and is zero outside $\Lambda$.
Therefore, $\|x-x^\Lambda\|_{p_1,q_2}\le\delta$, where
$x^\Lambda = \sum_{j\in\Lambda} x^j$,
$x^j\in \R^N $ is the matrix  with $j$-th column  equal to $x[j]$ and zeros at all other places.
So, according to~\eqref{one_column_bound},
\begin{multline*}
    \|x^\Lambda - D_\Gamma x^\Lambda\|_{q_1,q_2} \le
    \sum_{j\in\Lambda}\|x^j-D_\Gamma x^j\|_{q_1,q_2} \le
    C(d) s^{1/q_1-1/p_1}b^{-\alpha/2} \sum_{j\in\Lambda} \|x^j\|_{p_1} \le \\
    \le C(d) k s^{1/q_1-1/p_1}b^{-\alpha/2}.
\end{multline*}
Hence the vector $x$ is approximated by  vector
$D_\Gamma x^\Lambda\in Q_\Gamma$ with the error
$$
\|x-D_\Gamma x^\Lambda\|_{q_1,q_2} \le \|x-x^\Lambda\|_{q_1,q_2} +
\|x^\Lambda-D_\Gamma x^\Lambda\|_{q_1,q_2} \le \delta s^{1/q_1-1/p_1} + C(d)ks^{1/q_1-1/p_1}b^{-\alpha/2}.
$$
So,
$$
d_{m}(B_{p_1,p_2}^{s,b},\ell_{q_1,q_2}^{s,b})
\le \delta s^{1/q_1-1/p_1} + C(d)ks^{1/q_1-1/p_1}b^{-\alpha/2}.
$$
Note that $m\le C(d)sb/r\le C(d)Nb^{-1/d}$.
It remains to take $k\asymp b^{\alpha/4}$ to get the required upper bound.

Finally, we have to deal with the case $s < b$.
Let us group all the blocks into $\asymp b/s$ groups of $\asymp s$ blocks. In
each group we take the subspace of dimension $\asymp s^{2-\gamma}$ that provides
good approximation. Then the sum of these subspaces provides the required
approximation in the whole space (here we use that $p_2<q_2$).

\paragraph{Acknowledgements.} 
The authors thank A.A.~Vasil'yeva for fruitful discussions.

\end{document}